\documentclass{scrartcl}

\usepackage{amsmath}
\usepackage{amssymb}
\usepackage{todonotes}
\usepackage{amsthm}
\usepackage{mathtools}
\usepackage{enumitem}
\usepackage{csquotes}
\usepackage{tikz-cd}
\usepackage{hyperref}

\DeclareMathOperator{\Lip}{Lip}

\theoremstyle{definition}
\newtheorem{Def}{Definition}
\numberwithin{Def}{section}

\theoremstyle{plain}
\newtheorem{lem}[Def]{Lemma}
\newtheorem{thm}[Def]{Theorem}
\newtheorem{prop}[Def]{Proposition}
\newtheorem{cor}[Def]{Corollary}
\theoremstyle{remark}
\newtheorem{rem}[Def]{Remark}

\title{Some observations on transitivity of Lipschitz operators }
\author{Christian Bargetz \and Leon Kügler}

\begin{document}

\maketitle

\begin{abstract}
\noindent\textbf{Abstract.}
The universal property of the Lipschitz-free spaces allows for the linearisation of a base-point-preserving Lipschitz map $f\colon M\to M$ to a bounded linear operator $T_f\colon \mathcal{F}(M) \to \mathcal{F}(M)$. While it is known that the operator $T_f$ is weakly mixing whenever $f$ has this property, it is open whether a similar inheritance result for topological transitivity holds. Motivated by this question we investigate properties of $T_f$ under the assumption that $f$ is topologically transitive. In particular, we prove Kitai's theorem for Lipschitz operators, i.e. we show that every connected component of the spectrum of the complex variant of $T_f$ intersects the unit circle if $f$ is a topologically transitive Lipschitz map. 

\vspace{3mm}
\noindent \textbf{Keywords:} Lipschitz operator, (linear) dynamics, topological transitivity\\[1mm]
\noindent \textbf{MSC classification:} 47A16, 47B37
\end{abstract}

\section{Introduction}

A (discrete) dynamical system is a pair $(M,f)$ where $M$ is a metric space and $f\colon M\to M$ is a continuous mapping. Such a dynamical system is called \emph{topologically transitive} if for every pair of non-empty open sets $U,V\subset M$ there is an $n\in\mathbb{N}$ with $f^n(U)\cap V\neq \emptyset$. If $M$ is a complete, separable metric space without isolated points, by Birkhoff's transitivity theorem the property of being topologically transitive is equivalent to the existence of a point with a dense orbit, i.e., the existence of $x\in M$ for which
\[
  \operatorname{orb}(x,f) = \{x, f(x), f^2(x), \ldots\}
\]
is a dense subset of $M$.

In this note we are interested in a particular class of dynamical systems. We consider pointed metric spaces, i.e. metric spaces in which a distinguished point $0_M\in M$ is chosen. Instead of continuous functions, we restrict our attention to Lipschitz functions $f\colon M\to M$ which fix the base point, i.e., $f(0_M)=0_M$.

For this class of metric spaces, there are Banach spaces which allow for the linearisation of Lipschitz maps. More precisely, the Lipschitz-free space $\mathcal{F}(M)$ on a pointed metric space together with an isometric embedding $\delta_M\colon M \to \mathcal{F}(M)$ can be uniquely characterised by the property that for each Banach space $Y$ and every Lipschitz mapping $g\colon M \to Y$ with $g(0_M)=0$ there is a unique bounded linear operator $T_g\colon \mathcal{F}(M)\to Y$ with $\|T_g\|=\Lip(g)$, where $\Lip(G)$ is the Lipschitz constant of $g$, and such that the diagram
\begin{center}
  \begin{tikzcd}
    M \arrow{r}{g} \arrow[swap]{d}{\delta_M} & Y \arrow{d}{\mathrm{Id}_Y} \\
    \mathcal{F}(M) \arrow{r}{T_g} & Y
  \end{tikzcd}
\end{center}
is commutative, see e.g. Theorem~3.6 in~\cite[p.~84]{Weaver}. These spaces have been introduced by Arens and Eells in~\cite{ArensEells} and received a lot of attention in Banach space theory after the publication of~\cite{GodefroyKalton} by Godefroy and Kalton. Given two pointed metric spaces $M,N$ we may use the isometric embedding $\delta_N\colon N\to\mathcal{F}(N)$ which has the additional property of $\delta_N(0_N)=0$ to extend every Lipschitz mapping $f\colon M\to N$ with $f(0_M)=0_N$ to a Lipschitz mapping $\tilde{f}\colon M\to \mathcal{F}(N)$ with $\tilde{f}(0_M)=0$. Using the above universal property, we now obtain a bounded linear operator $T_f\colon \mathcal{F}(M) \to \mathcal{F}(N)$ for which the diagram
\begin{center}
  \begin{tikzcd}
    M \arrow{r}{f} \arrow[swap]{d}{\delta_M} & N \arrow{d}{\delta_N} \\
    \mathcal{F}(M) \arrow{r}{T_f} & \mathcal{F}(N)
  \end{tikzcd}
\end{center}
is commutative. In~\cite{ACP2023:CompactWeaklyCompactLipschitzOperators} for operators of this form, i.e. bounded linear operators which arise as the linearisation of Lipschitz mappings, the name \emph{Lipschitz operator} has been introduced. In the aforementioned article, compactness and weak compactness of these operators have been studied.

Specialising to the case $M=N$ and a Lipschitz mapping $f\colon M\to M$ which fixes the base point we return to the situation of the dynamical system considered above. Now the linearisation results in the commutative diagram
\begin{center}
  \begin{tikzcd}
    M \arrow{r}{f} \arrow[swap]{d}{\delta_M} & M \arrow{d}{\delta_M} \\
    \mathcal{F}(M) \arrow{r}{T_f} & \mathcal{F}(M)
  \end{tikzcd}
\end{center}
where we might also consider the \emph{linear dynamical system} $(\mathcal{F}(M), T_f)$. In linear dynamics, operators that admit a dense orbit are called \emph{hypercyclic}.

In~\cite{Murillo2015Chaotic} Murillo-Arcila and Peris showed that in certain situations dynamical properties of a bounded linear operator $T$ are inherited from the behaviour on a $T$-invariant subset, or on a sequence of $T$-invariant subsets. In particular, they showed that the stronger properties of weak mixing and mixing and the combination of weak mixing and chaos are inherited from $T$-invariant subsets containing the origin. Since $\delta(M)\subset \mathcal{F}(M)$ is a $T_f$-invariant subset containing the origin for every Lipschitz mapping $f\colon M\to M$ with $f(0_M)=0_M$, they were able to conclude that $T_f$ inherits the properties of weak mixing, mixing and the combination of weak mixing and chaos from $f$. In~\cite{Abbar2021Dynamics} Abbar, Coine and Petitjean systematically investigated the connection between dynamical properties of $f$ and of $T_f$. On the negative side, they gave an example of a Lipschitz mapping $f\colon M\to M$ with $f(0_M)=0_M$ on a discrete metric space which admits a dense orbit but its linearisation $T_f$ does not. While this shows that the property of admitting a dense orbit is not inherited by the linearisation, since on discrete spaces this is not equivalent to topological transitivity, the question of whether topological transitivity is preserved remains open. On the positive side, using splitting theorems for dynamical systems, Abbar, Coine and Petitjean showed that if $f\colon M\to M$ is a base-point-preserving and topologically transitive Lipschitz mapping on a finitely branching compact $\mathbb{R}$-tree, then $T_f$ is also topologically transitive. In fact, they showed that it is even weakly mixing. In~\cite{KüglerLeon2026DpoL} the second author was able to extend this result to compact topological graphs. In~\cite{Abbar2021Dynamics} Abbar, Coine and Petitjean also gave the \emph{Hypercyclicity criterion for Lipschitz operators} and the \emph{Chaoticity criterion for Lipschitz operators}. These criteria are sufficient conditions on $f$ to ensure that $T_f$ is hypercyclic and chaotic, respectively.

After these two publications, the dynamical properties of Lipschitz operators attracted more attention as the recent publications~\cite{Cobollo2025Disjoint, Tapia2024RecurrenceAndVectorsEscaping} show.

Motivated by the question of whether topological transitivity is inherited by $T_f$, we investigate properties of $T_f$ that are implied by $f$ being topologically transitive. More precisely, we show that $T_f$ satisfies a number of necessary conditions for hypercyclicity whenever $f$ is topologically transitive. Maybe most importantly, we prove Kitai's theorem for Lipschitz operators, i.e. we show that every connected component of the spectrum of the complex variant of $T_f$, which we call $S_f$, intersects the unit circle provided $f$ is topologically transitive. 

The first result in Section~\ref{ssec:EigenValues} already appeared in the second author's Master's thesis~\cite{KüglerLeon2026DpoL} which was written under the supervision of the first author.

\section{Preliminaries and Notation}
\subsection{Lipschitz-free spaces}

The space $\mathcal{F}(M)$ can be constructed in the following way. We consider the space
\[
  \Lip_0(M) := \{f\colon M\to \mathbb{K}\colon f\;\text{Lipschitz}, f(0_M)=0\}
\]
of Lipschitz functions vanishing at the base point and equip it with the norm
\[
  \|f\|_{\Lip_0(M)} := \Lip(f).
\]
Note that the condition $f(0)=0$ ensures that the above is indeed a norm. It is easy to see that the point evaluation functionals $\delta_x$ for $x\in M$ define continuous linear functionals on $\Lip_0(M)$. Using this, we consider the mapping
\[
  \delta \colon M \to (\Lip_0(M))^*, \qquad x \mapsto \delta_x
\]
and define the space $\mathcal{F}(M)$ as the closed linear span of $\delta(M)$ in $(\Lip_0(M))^*$ and equip it with the norm induced by the norm on $(\Lip_0(M))^*$. It can be shown that the dual space of $\mathcal{F}(M)$ is $\Lip_0(M)$.

Since in the literature on Lipschitz-free spaces, these spaces are typically considered as real Banach spaces, we will write $\mathcal{F}(M)$ for the real case and $\mathcal{F}(M,\mathbb{C})$ for the complex case. A note of caution seems to be necessary here to point out that not all results on $\mathcal{F}(M)$ in the literature are true in the complex case.

We refer the interested reader to Weaver's book~\cite{Weaver} for a detailed exposition of these spaces and their dual spaces.

\subsection{Linear Dynamics}
We briefly recall the dynamical properties of bounded linear operators, which are relevant for this short note. Let $X$ be a Banach space and $T\colon X\to X$ be a bounded linear operator.

The operator $T$ is called \emph{hypercyclic} if there is an $x\in X$ for which the orbit
\[
  \operatorname{orb}(x,T) = \{T^nx\colon n\in\mathbb{N}_0\}
\]
is a dense subset of $X$. The points with dense orbit are also called \emph{hypercyclic vectors} of $T$. By Birkhoff's transitivity theorem hypercyclicity of $T$ is equivalent to $T$ being topologically transitive, i.e., that for every pair $U,V\subset X$ of non-empty open sets there is an $m\in\mathbb{N}$ with $T^m(U)\cap V \neq \emptyset$. Birkhoff's transitivity theorem also shows that for a hypercyclic operator the set of hypercyclic vectors is a dense $G_\delta$-set.

The operator $T$ is called \emph{weakly mixing} if the operator $T\oplus T \colon X\oplus X\to X\oplus X$ is hypercyclic. Since $T\oplus T$ is quasiconjugate to $T$, this is a stronger notion than being hypercyclic.

The operator $T$ is called \emph{mixing} if for every pair of non-empty open sets $U,V\subset X$ there is an $N\in\mathbb{N}$ such that $T^n(U)\cap V\neq\emptyset$ for all $n\geq N$. This turns out to be a stronger notion than weak mixing.

Finally, the operator $T$ is called \emph{(Devaney) chaotic} if it is hypercyclic and the set
\[
  \operatorname{Per}(T) := \{x\in X\colon \exists n\in\mathbb{N} \;\text{with}\; T^nx=x\}
\]
of periodic points is a dense subset of $X$.

We refer the interested reader to~\cite{LinearChaos} for a detailed presentation of these notions.

\section{Evidence for the inheritance of transitivity}

\subsection{Eigenvalues and orbits of $T_f^*$}\label{ssec:EigenValues}
There are two necessary conditions for hypercyclicity of an operator that have to do with its adjoint, see Proposition~5.1 in \cite{LinearChaos}. We show that transitivity of $f$ implies both of them for the Lipschitz operator $T_f$. We first observe that since the span of $\delta(M)$ is a dense subspace of $\mathcal{F}(M)$, the operator $T_f$ is uniquely determined by its values on the span of $\delta(M)$. Given $\mu = \sum_{k=1}^{m} a_k \delta_{x_k}$ in this span, the operator $T_f$ acts by
\[
    T_f(\mu) = T_f\Big(\sum_{k=1}^{m} a_k \delta_{x_k}\Big) = \sum_{k=1}^{m} a_k \delta_{f(x_k)}.
\]
Given $g\in\Lip_0(M)$, the evaluation of $g$ at $T_f(\mu)$ is given as
\[
    \langle g, T_f(\mu) \rangle = \sum_{k=1}^{m} a_k g(f(x_k)) = \langle g\circ f, \mu\rangle.
\]
This shows that the adjoint of $T_f$ is the composition operator $C_f$.

\begin{prop}
  Let $M$ be a separable complete pointed metric space containing more than one point and $f\colon M\to M$ a topologically transitive Lipschitz mapping fixing the base point. Then the operator 
  \[
  T_f^*=C_f: \Lip_0(M) \to \Lip_0(M), \qquad g \mapsto g \circ f
  \]
  has no eigenvalues. Equivalently, $T_f- \lambda I$ has dense range for any $\lambda \in \mathbb{R}$.
\end{prop}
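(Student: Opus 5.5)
Suppose $g\in\Lip_0(M)$, $g\neq 0$, and $\lambda\in\mathbb{R}$ satisfy $g\circ f=\lambda g$. We aim for a contradiction by a case split on $|\lambda|$, using two facts. Topological transitivity of $f$ forces $M$ to have no isolated points: $M$ has more than one point, and an isolated point together with transitivity would make $M$ a single finite orbit, which cannot happen while $0_M$ is fixed and $M\neq\{0_M\}$. Birkhoff's transitivity theorem, applicable since $M$ is complete, separable and without isolated points, then gives a point $x_0$ whose orbit is dense. The equivalence with dense range of $T_f-\lambda I$ is the standard duality: $\overline{\operatorname{ran}}(T_f-\lambda I)^\perp=\ker(T_f^*-\lambda I)=\ker(C_f-\lambda I)$, combined with Hahn--Banach.

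\textbf{Case $\lambda=0$ or $|\lambda|<1$.} Along the dense orbit, $g(f^n(x_0))=\lambda^n g(x_0)$. If $\lambda=0$, then $g$ vanishes on $\{f^n(x_0)\colon n\geq1\}$. This set is still dense, because removing one point from a dense set in a space without isolated points leaves it dense. By continuity $g\equiv 0$. If $0<|\lambda|<1$, then $g(f^n(x_0))\to 0$. Since the orbit is dense, every $y\in M$ is a limit of $f^{n_k}(x_0)$ with $n_k\to\infty$; we can take $n_k\to\infty$ because each point is a limit of infinitely many distinct orbit points, again using no isolated points. Continuity then gives $g(y)=0$. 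Either way $g\equiv0$, a contradiction.

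\textbf{Case $|\lambda|\geq1$.} Here I would use the base point. Since $g(0_M)=0$ and $g$ is Lipschitz, $|g(x)|\leq \Lip(g)\,d(x,0_M)$. Pick $y$ with $g(y)\neq0$, and a small ball $U$ around $y$ on which $|g|>|g(y)|/2$. Choose a ball $V$ around $0_M$ of radius $r$ with $\Lip(g)\,r<|g(y)|/2$. Transitivity gives $z\in U$ and $n$ with $f^n(z)\in V$. Then $|g(f^n(z))|=|\lambda|^n|g(z)|>|g(y)|/2$, while $|g(f^n(z))|\leq \Lip(g)\,r<|g(y)|/2$, a contradiction.

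This argument in fact works for all $\lambda$ with $|\lambda|\geq 1$, including complex ones. The case $|\lambda|<1$ needs the dense-orbit argument, or symmetrically transitivity from a neighbourhood of $0_M$ into $U$. The latter is cleaner: take $z\in V$ with $f^n(z)\in U$. Then $|\lambda|^n|g(z)|>|g(y)|/2$ with $|g(z)|$ small, which contradicts $|\lambda|\le1$. So both cases reduce to transitivity between a neighbourhood of $0_M$ and a set where $|g|$ is bounded below. This avoids Birkhoff altogether, except for checking that $g\circ f=\lambda g$ with $\lambda=0$ is handled, which it is by the same estimate.

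The main obstacle I expect is bookkeeping rather than depth: making sure the two directions of transitivity, $V\to U$ and $U\to V$, are available. This holds by definition, since transitivity applies to any ordered pair of open sets. I also need the radius of $V$ chosen so that the Lipschitz bound at the base point beats the lower bound on $U$.
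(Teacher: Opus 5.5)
Your argument is correct, and your final (``cleaner'') version takes a genuinely different route from the paper.

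The paper first rules out isolated points and uses Birkhoff's theorem to obtain a dense set $\mathcal{D}$ of points with dense orbit. It then splits into two cases:
\begin{itemize}
\item For $|\lambda|\le 1$ it notes that $\sup_M|g|=\sup_n|\lambda|^n|g(\tilde{x})|\le|g(\tilde{x})|$ for every $\tilde{x}\in\mathcal{D}$. So $|g|$ is constant on a dense set, hence zero since $g(0_M)=0$.
\item For $|\lambda|>1$ it lets the orbit of a transitive point $\tilde{x}$ with $g(\tilde{x})\neq0$ return to $B_\varepsilon(\tilde{x})$ after more than $m$ steps. This makes $g$ unbounded near $\tilde{x}$.
\end{itemize}

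You instead apply the definition of transitivity directly to two sets. One is a small ball $V$ around $0_M$, where $|g|\le\Lip(g)\,r$ is small precisely because $g(0_M)=0$. The other is a ball $U$ on which $|g|$ is bounded below. You go $U\to V$ when $|\lambda|\ge1$ and $V\to U$ when $|\lambda|\le1$, so $g\circ f^n=\lambda^n g$ gives the contradiction in one line. This is the same device the paper uses in its next proposition on unbounded $C_f$-orbits. That proposition in fact already covers $|\lambda|\le 1$, since then $\|C_f^ng\|=|\lambda|^n\|g\|$ stays bounded.

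Your route buys generality and economy:
\begin{itemize}
\item It needs no Birkhoff theorem, hence no separability, completeness, or isolated-points discussion. Your preliminary paragraph and the dense-orbit treatment of $|\lambda|<1$ can therefore be dropped.
\item It uses only continuity of $g$ at two points.
\item It works verbatim for complex $\lambda$, giving Remark~\ref{ComplexCaseEVofAdjoint} immediately.
\end{itemize}
The paper's route gives a little extra information along the way. For example, for $|\lambda|>1$ the contradiction comes from local boundedness of $g$ near a transitive point alone, without using the base point. For the statement at hand, though, yours is shorter and holds under weaker hypotheses.
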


\begin{proof}
  We first note that a non-singleton metric space with isolated points does not admit a topologically transitive continuous mapping with a fixed point, see Ex.~1.2.3 in~\cite[p.~25]{LinearChaos}. Hence Birkhoff's transitivity theorem allows us to conclude that the set
  \begin{equation*}
    \mathcal{D} \coloneq \{\tilde{x} \in M \hspace{4pt}\vert \hspace{4pt} \text{orb}(\tilde{x}, f) \hspace{4pt} \text{is dense in} \hspace{3pt} M\}.
  \end{equation*}
  is a dense subset of~$M$. With the aim of arriving at a contradiction, we assume that the composition operator $T_f^*=C_f$ has an eigenvalue $\lambda \in \mathbb{R}$. Picking a corresponding eigenvector, we obtain $g \in \Lip_0(M) \setminus \{0\}$ with
  \begin{equation*}
     C_f^n(g) = g \circ f^n= \lambda^n g.
  \end{equation*}
  for all $n\geq 1$. Now we distinguish two cases.

  First we assume that $\vert \lambda \vert \leq 1$. Now, for every point $\tilde{x}\in M$ with a dense orbit, we have
  \begin{align*}
    \underset{x \in M}{\text{sup}} \hspace{2pt} | g(x)| &= \hspace{-4pt}\underset{x \in \text{orb}(\tilde{x},f)}{\text{sup}} | g(x)| \hspace{4pt} = \hspace{4pt} \underset{n \geq 0}{\text{sup}} \hspace{2pt} | g(f^n(\tilde{x}))| \hspace{4pt}= \hspace{3pt} \begin{cases}
      \hspace{2pt} \underset{n \geq 0}{\text{sup}} \hspace{4pt} \vert \lambda^n \vert \vert g(\tilde{x})\vert  & \text{if } \hspace{4pt} \lambda \neq 0 \\[6pt]
      \vert g(\tilde{x}) \vert \hspace{2pt} & \text{if } \hspace{4pt}\lambda = 0
    \end{cases} \\
                                                        & \leq \hspace{3pt} \vert g(\tilde{x})\vert \hspace{1pt}.
  \end{align*}
  In other words, $g$ attains its maximum in every point with dense orbit and hence is constant on the dense set $\mathcal{D}$. The continuity of $g$ and the fact that $g$ vanishes on the base point, imply that $g$ is the zero-function. This is a contradiction to the choice of $g$ as an eigenvector.

  Otherwise, if $\vert \lambda \vert > 1$ we may use that $\mathcal{D}$ is a dense set and $g$ a non-zero Lipschitz function to pick a point $\tilde{x}\in M$ with dense orbit and $g(\tilde{x})\neq 0$. We now choose an arbitrary $\varepsilon>0$ and show that $g$ is unbounded on the ball $B_{\varepsilon}(\tilde{x})$, which is a contradiction to $g$ being Lipschitz. To this end we fix an arbitrary $R > 0$. Because of $\vert \lambda \vert >1$, we can choose a large enough $m \in \mathbb{N}$ with
  \begin{equation*}
    \vert \lambda\vert^m \vert g(\tilde{x})\vert > R.
  \end{equation*}
  Since $M$ does not have isolated points, we may conclude that $M$ has no isolated points. Hence, the orbit of $f^m(\tilde{x})$ is also a dense set. Since $B_{\varepsilon}(\tilde{x})$ is a non-empty open set, this allows us to choose $n \in \mathbb{N}$ such with $f^{m+n}(\tilde{x}) \in B_{\varepsilon}(\tilde{x})$. For this point, we obtain
  \begin{equation*}
    \vert g(f^{m+n}(\tilde{x}))\vert = \vert (C_f^{m+n}(g))(\tilde{x}) \vert =  \vert \lambda \vert^{m+n} \vert g(\tilde{x})\vert   > \vert \lambda \vert^{m} \vert g(\tilde{x})\vert > R.
  \end{equation*}
  from the above. Since $R>0$ was chosen arbitrarily, we obtain that $g$ is unbounded on $B_\varepsilon(\tilde{x})$.
  
  The fact that the absence of eigenvalues of the adjoint is equivalent to $T_f-\lambda I$ having dense range for any $\lambda \in \mathbb{R}$ can be found e.g. in Lemma 2.53 in~\cite[p.~52]{LinearChaos}.
\end{proof}

\begin{rem} \label{ComplexCaseEVofAdjoint}
  The above proof also works for the complex version of the Lipschitz operator, which we denote by $S_f$. Also in this case, the absence of eigenvalues of the adjoint is equivalent to $S_f-\lambda I$ having dense range for any $\lambda \in \mathbb{C}$.
\end{rem}

\begin{prop}
  Let $M$ be a separable complete pointed metric space containing more than one point and $f\colon M\to M$ a topologically transitive Lipschitz mapping fixing the base point. Then, every function in $\Lip_0(M)\setminus\{0\}$ has an unbounded orbit under the composition operator  $T_f^*=C_f\colon \Lip_0(M) \to \Lip_0(M)$.
\end{prop}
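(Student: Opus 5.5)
We argue by contradiction: take $g\in\Lip_0(M)\setminus\{0\}$ and suppose its orbit $\{g\circ f^n\colon n\geq 0\}$ is bounded in $\Lip_0(M)$, i.e.\ there is $C>0$ with $\Lip(g\circ f^n)\leq C$ for all $n\geq 0$. As in the proof of the previous proposition, $M$ has no isolated points, so by Birkhoff's transitivity theorem the set $\mathcal{D}$ of points with dense orbit is dense (indeed a dense $G_\delta$). Since $g\neq 0$, $g$ is continuous and $\mathcal{D}$ is dense, we can pick $\tilde{x}\in\mathcal{D}$ with $g(\tilde{x})\neq 0$. We then aim to contradict the uniform bound by playing the value $g(\tilde{x})$ off against $g(0_M)=0$ along the orbit.

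The key point is that $0_M$ is a fixed point of $f$. For every $n$ and every $y\in M$ the bound gives
\[
  |g(f^n(y))| = |g(f^n(y)) - g(f^n(0_M))| \leq C\, d(y,0_M).
\]
Taking $y=\tilde{x}$ only shows that $g$ is bounded along the orbit of $\tilde{x}$, so we instead use points close to $0_M$. By density of the orbit of $\tilde{x}$, there are $m_k$ with $f^{m_k}(\tilde{x})\to 0_M$. To reach $\tilde{x}$ again from such a point, note that $\mathcal{D}$ is invariant under $f$ (since $M$ has no isolated points, $f^m(\tilde{x})$ again has a dense orbit, as used in the previous proof). So $y_k:=f^{m_k}(\tilde{x})\in\mathcal{D}$ has a dense orbit, and we can choose $n_k$ with $f^{n_k}(y_k)$ within distance $\eta$ of $\tilde{x}$, where $\eta$ is small enough that $|g(z)|\geq |g(\tilde{x})|/2$ for all $z$ with $d(z,\tilde{x})<\eta$.

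Applying the displayed inequality with $y=y_k$ and $n=n_k$ then gives
\[
  \tfrac12|g(\tilde{x})| \leq |g(f^{n_k}(y_k))| \leq C\, d(y_k,0_M) \xrightarrow[k\to\infty]{} 0,
\]
which contradicts $g(\tilde{x})\neq 0$. Hence the orbit of $g$ is unbounded. An alternative route to the same end is to show that $g\circ f^n$ vanishes on $\mathcal{D}$, and hence everywhere by density and continuity.

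The main obstacle is that the argument needs an orbit point arbitrarily close to $0_M$ to return near $\tilde{x}$. This is what the invariance of $\mathcal{D}$ under $f$ provides, and it rests on the absence of isolated points, which we take from Ex.~1.2.3 in~\cite[p.~25]{LinearChaos}, exactly as in the previous proof. The argument uses only the real structure, so it carries over verbatim to the complex operator $S_f$.
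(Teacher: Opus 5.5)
Your proof is correct and uses the same key estimate as the paper, $|g(f^n(y))| = |g(f^n(y))-g(f^n(0_M))| \le C\,d(y,0_M)$, together with transitivity to send points near $0_M$ close to a point where $g$ does not vanish. The only difference is that the paper applies the definition of topological transitivity directly to the open sets $B_{\varepsilon/R}(0)$ and $B_{\varepsilon/L}(y)$, so it never needs Birkhoff's theorem, the absence of isolated points, or the $f$-invariance of $\mathcal{D}$; your ``main obstacle'' does not arise on that route.
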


\begin{proof}
  If $M$ is a singleton, then the statement is vacuously true since  $\Lip_0(M)\setminus\{0\}=\emptyset$. So we can assume that $M$ has more than one point.
  To produce a contradiction, we assume that there are $R>0$ and $g \in \Lip_0(M) \setminus \{0\}$ such that
  \begin{equation*}
    \sup_{n\geq 0} \|C_f^n(g)\|_{\Lip_0(M)} = \sup_{n \geq 0} \Lip(g \circ f^n) \leq R .
  \end{equation*}
  We now show that $g$ has to be the zero function. In order to do so, let $y \in M\setminus \{0\}$ be arbitrary. We show that $g(y)=0$.  
  To shorten the notation we set $L:=\Lip(g)$.  Given an $\varepsilon>0$, we consider the non-empty, open sets $B_{\frac{\varepsilon}{R}}(0)$ and $B_{\frac{\varepsilon}{L}}(y)$. The topological transitivity of $f$ allows us to pick a point $\tilde{x}\in B_{\frac{\varepsilon}{R}}(0)$ and an $n > 0$ with $f^n(\tilde{x}) \in B_{\frac{\varepsilon}{L}}(y)$. We obtain
  \begin{align*}
    |g(y)| &\leq |g(y)-g(f^n(\tilde{x}))|+|g(f^n(\tilde{x})| \leq L d(y,f^n(\tilde{x})) + \Lip(g \circ f^n) \, d(\tilde{x},0) \\ & \leq 
             \varepsilon + R  \ d(\tilde{x},0) < \varepsilon + R \frac{\varepsilon}{R}=2 \varepsilon
  \end{align*}
  Since $y$ and $\varepsilon$ where arbitrary, we get the contradictory statement $g \equiv 0$
\end{proof}

\subsection{Kitai's theorem for Lipschitz operators}

For hypercyclic linear operators on complex Banach spaces Kitai's Theorem, see~\cite{Kitai} or Theorem~5.6 in~\cite[p.~140]{LinearChaos} states that every connected component of the spectrum has to intersect the unit circle $\mathbb{T}\subset \mathbb{C}$. We now show that this also true for complex Lipschitz operators coming from a topologically transitive Lipschitz map.

For the proof, we use the following two well-known results from spectral theory and a topological Lemma which we reproduce here for the convenience of the reader.

\begin{lem}[{Lemma~5.2 in~\cite[p.~138]{LinearChaos}}] \label{ExponentialBound}
  Let $T$ be a bounded operator on a complex Banach space $X$ and $r>0$.
  \begin{itemize}
  \item[(i)] If $\sigma(T) \subset \{z \in \mathbf{C} : |z|<r\}$, then there are $\varepsilon \in (0,r)$ and $A >0$ such that
    \begin{equation*}
      \|T^nx\| \leq A (r- \varepsilon)^n \|x \|
    \end{equation*}
    for all $x \in X$ and $n\geq 0$.
  \item[(ii)] If $\sigma(T) \subset \{z \in \mathbf{C} : |z|>r\}$, then there are $\varepsilon \in (0,r)$ and $A >0$ such that
    \begin{equation*}
      \|T^nx\| \geq A (r+ \varepsilon)^n \|x \|
    \end{equation*}
    for all $x \in X$ and $n\geq 0$.
  \end{itemize}
\end{lem}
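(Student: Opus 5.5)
The statement is Lemma~\ref{ExponentialBound}, a standard consequence of the spectral radius formula, and I would prove both parts directly from it.

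For (i), note that $\sigma(T)$ is compact and contained in the open disc of radius $r$. Hence the spectral radius satisfies $\rho(T)=\max_{z\in\sigma(T)}|z|<r$ (with $\rho(T)$ possibly $0$). I would choose $\varepsilon\in(0,r)$ with $\rho(T)<r-\varepsilon$, for instance $\varepsilon=(r-\rho(T))/2$. Gelfand's formula $\rho(T)=\lim_{n}\|T^n\|^{1/n}$ then gives an $N$ with $\|T^n\|\le (r-\varepsilon)^n$ for all $n\ge N$. For the finitely many $n<N$, I set
\[
A:=\max\Bigl\{1,\ \max_{0\le n<N}\|T^n\|(r-\varepsilon)^{-n}\Bigr\},
\]
so that $\|T^n x\|\le\|T^n\|\,\|x\|\le A(r-\varepsilon)^n\|x\|$ for every $n\ge0$.

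For (ii), the hypothesis forces $0\notin\sigma(T)$, so $T$ is invertible. By the spectral mapping theorem for inverses, $\sigma(T^{-1})=\{z^{-1}: z\in\sigma(T)\}\subset\{|w|<1/r\}$. Applying (i) to $T^{-1}$ with radius $1/r$ yields $\eta\in(0,1/r)$ and $B>0$ with $\|T^{-n}y\|\le B(1/r-\eta)^n\|y\|$. Substituting $y=T^n x$ gives
\[
\|T^n x\|\ge B^{-1}(1/r-\eta)^{-n}\|x\|.
\]
It remains to write $(1/r-\eta)^{-1}=r+\varepsilon$, where $\varepsilon=\frac{r^2\eta}{1-r\eta}>0$, and to take $A=B^{-1}$.

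The only point requiring care is that the conclusion asks for $\varepsilon<r$. If the computed $\varepsilon$ is too large, I would simply replace it by a smaller positive value, because $(r+\varepsilon')^n\le(r+\varepsilon)^n$ whenever $\varepsilon'\le\varepsilon$, so the lower bound survives. In (i) the bound $\varepsilon<r$ holds automatically by construction. I do not expect any real obstacle here: the whole content lies in Gelfand's formula and the inversion of the spectrum, and both are standard.
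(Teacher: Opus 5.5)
Your proof is correct and is the standard argument behind the cited Lemma~5.2 of Grosse-Erdmann and Peris; the paper only quotes that lemma and gives no proof of its own. As there, you use Gelfand's spectral radius formula for (i), and for (ii) you use the invertibility of $T$ with $\sigma(T^{-1})=\{z^{-1}: z\in\sigma(T)\}$ to apply (i) to $T^{-1}$. Your handling of the side condition $\varepsilon<r$ in (ii), by shrinking $\varepsilon$, is also fine.
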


\begin{prop}[Riesz decomposition theorem, {Theorem~B.9 in~\cite[p.~364]{LinearChaos}}]
  \label{prop:RieszDecomposition}
  Let $T$ be a bounded operator on a complex Banach space $X$. Suppose the spectrum $\sigma(T)$ can be partitioned into two nonempty disjoint closed subsets  $\sigma_1$ and $\sigma_2$. Then there are nontrivial $T$-invariant closed subspaces $X_1$ and $X_2$ such that 
  \begin{equation*}
    X = X_1 \oplus X_2 \ , \ \ \sigma(T|_{X_1}) = \sigma_1 \ , \ \  \sigma(T|_{X_2}) = \sigma_2.
  \end{equation*}
  Note that since $\sigma(T)$ is a closed subset of $\mathbb{C}$, a subset of $\sigma(T)$ is closed in $\sigma(T)$ if and only if it is closed in $\mathbb{C}$.
\end{prop}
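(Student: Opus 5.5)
The proof is the standard construction of a Riesz projection through the holomorphic functional calculus.

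\textbf{The contour.} Since $\sigma_1$ and $\sigma_2$ are disjoint compact subsets of $\mathbb{C}$, they have positive distance. Hence there is a cycle $\Gamma$ in the resolvent set $\rho(T)$ with winding number $1$ around every point of $\sigma_1$ and winding number $0$ around every point of $\sigma_2$. Concretely, $\Gamma$ can be taken as the boundary of a finite union of small squares from a grid covering $\sigma_1$, with mesh smaller than a third of $\operatorname{dist}(\sigma_1,\sigma_2)$.

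\textbf{The projection.} Define
\[
P := \frac{1}{2\pi i}\oint_\Gamma (\lambda I - T)^{-1}\,d\lambda .
\]
This is a bounded operator, because the resolvent is continuous on $\Gamma$. It commutes with $T$, because every resolvent does. To prove $P^2=P$, I would take a second cycle $\Gamma'$ slightly outside $\Gamma$ that still separates $\sigma_1$ from $\sigma_2$. Then:
\begin{itemize}
\item express $P^2$ as a double integral over $\Gamma\times\Gamma'$;
\item apply the resolvent identity $R(\lambda)R(\mu)=\frac{R(\lambda)-R(\mu)}{\mu-\lambda}$;
\item apply Fubini;
\item use the scalar Cauchy integral formula, noting that each $\lambda\in\Gamma$ has winding number $1$ with respect to $\Gamma'$, while each $\mu\in\Gamma'$ has winding number $0$ with respect to $\Gamma$.
\end{itemize}
This bookkeeping is the first technical step.

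\textbf{The subspaces.} Set $X_1:=P(X)$ and $X_2:=\ker P=(I-P)(X)$. Since $P$ is a bounded projection, both are closed and $X=X_1\oplus X_2$. Since $TP=PT$, both are $T$-invariant.

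\textbf{The spectra.} This is the step I expect to be the main obstacle. First, $\lambda I-T$ is invertible on $X$ if and only if both restrictions $(\lambda I-T)|_{X_i}$ are invertible, so
\[
\sigma(T)=\sigma(T|_{X_1})\cup\sigma(T|_{X_2}).
\]
Next I would show $\sigma(T|_{X_1})\subset\sigma_1$. For $\mu\notin\sigma_1$, choose $\Gamma$ so that $\mu$ lies outside it, which is allowed when $\mu\notin\sigma(T)$; if $\mu\in\sigma_2$ this works automatically. Define
\[
S_\mu:=\frac{1}{2\pi i}\oint_\Gamma (\mu-\lambda)^{-1}(\lambda I - T)^{-1}\,d\lambda .
\]
Using $(\mu I-T)=(\mu-\lambda)I+(\lambda I-T)$ together with Cauchy's theorem, one checks $(\mu I-T)S_\mu=S_\mu(\mu I-T)=P$. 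Hence $S_\mu|_{X_1}$ inverts $(\mu I-T)|_{X_1}$. The symmetric argument with $I-P$, which equals the integral over a cycle surrounding $\sigma_2$, gives $\sigma(T|_{X_2})\subset\sigma_2$.

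Combining these with $\sigma_1\cap\sigma_2=\emptyset$ forces $\sigma(T|_{X_i})=\sigma_i$ for $i=1,2$.

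\textbf{Nontriviality.} If $X_1=\{0\}$, its spectrum would be empty. Then $\sigma_1=\sigma(T)\cap\sigma_1\subset\sigma(T|_{X_2})\subset\sigma_2$ would be empty, contradicting $\sigma_1\neq\emptyset$. The same argument shows $X_2\neq\{0\}$.
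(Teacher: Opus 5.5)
The paper gives no proof of this statement; it quotes it as Theorem~B.9 of Grosse-Erdmann--Peris. Your argument via the Riesz projection $P=\frac{1}{2\pi i}\oint_\Gamma(\lambda I-T)^{-1}\,d\lambda$ is the standard proof of that theorem, and it is correct. It uses a few routine facts tacitly, which you may want to state:
\begin{itemize}
\item $P$ does not depend on the admissible cycle, by the global Cauchy theorem. This justifies both replacing $\Gamma$ by $\Gamma'$ and choosing $\Gamma$ with $\mu$ outside it.
\item $S_\mu$ commutes with $P$, so $S_\mu$ maps $X_1$ into itself and its restriction really is an inverse on $X_1$.
\item $I-P$ equals the integral over a cycle around $\sigma_2$, because the integral of the resolvent over a large circle is $I$.
\end{itemize}
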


\begin{lem}[{Lemma~1.21 in~\cite[p.~11]{bayart2009dynamics}}] \label{Lem:ClopenSuperset}
Let $\sigma$ be a compact subset of $\mathbb{C}$, and let $C$ be a connected component of $\sigma$. Assume that $C$ is contained in some open set $\Omega\subset \mathbb{C}$. Then there exists a subset $\sigma_1 \subset \sigma$ that is clopen in $\sigma$ and satisfies $C \subseteq\sigma_1 \subseteq \Omega$.
\end{lem}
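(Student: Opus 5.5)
The plan is to use the classical fact that in a compact Hausdorff space the connected component of a point coincides with its \emph{quasi-component}, i.e.\ with the intersection of all clopen subsets containing that point. Combined with a compactness argument applied to $\sigma\setminus\Omega$, this produces a single clopen set squeezed between $C$ and $\Omega$. Throughout, $\sigma$ carries the subspace topology from $\mathbb{C}$, so it is compact and metrisable, and in particular normal.

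First I fix $x\in C$ and let $Q$ be the intersection of all clopen subsets of $\sigma$ containing $x$. The inclusion $C\subseteq Q$ is immediate. If $K$ is clopen with $x\in K$, then $C\cap K$ and $C\setminus K$ are disjoint relatively closed subsets of $C$ whose union is $C$. Since $C$ is connected and $x\in C\cap K$, we get $C\subseteq K$.

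For the reverse inclusion I show that $Q$ is connected; since $x\in Q$ and $C$ is the component of $x$, this gives $Q\subseteq C$. Suppose $Q=A\cup B$ with $A,B$ disjoint and closed, and $x\in A$. Both sets are compact, so by normality there are disjoint open sets $U\supseteq A$ and $V\supseteq B$ in $\sigma$. The compact set $\sigma\setminus(U\cup V)$ is disjoint from $Q$. It is therefore covered by the open sets $\sigma\setminus K$, where $K$ runs over the clopen sets containing $x$. Extracting a finite subcover yields finitely many such sets whose intersection $K_0$ is clopen, contains $x$, and satisfies $K_0\subseteq U\cup V$.

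Next I check that $K_0\cap U$ is clopen. It is open as an intersection of two open sets. Because $K_0\subseteq U\cup V$ and $U\cap V=\emptyset$, we have $K_0\cap U=K_0\setminus V$, which is closed. Since $x\in K_0\cap U$, the definition of $Q$ gives $Q\subseteq K_0\cap U\subseteq U$. Hence $B\subseteq Q\cap V\subseteq U\cap V=\emptyset$, so $Q$ is connected and $Q=C$.

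Finally, $\sigma\setminus\Omega$ is compact and disjoint from $C=Q$. So it is covered by the sets $\sigma\setminus K$, with $K$ clopen and $x\in K$. A finite subcover gives clopen sets $K_1,\dots,K_n$ containing $x$ with $(\sigma\setminus\Omega)\cap K_1\cap\dots\cap K_n=\emptyset$. I then set $\sigma_1:=K_1\cap\dots\cap K_n$; if $\sigma\setminus\Omega=\emptyset$, I simply take $\sigma_1=\sigma$. This $\sigma_1$ is clopen in $\sigma$ and satisfies $\sigma_1\subseteq\Omega$. It also satisfies $C=Q\subseteq\sigma_1$, since each $K_i$ contains $Q$.

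The only genuinely nontrivial step is the connectedness of the quasi-component $Q$. The key point there is to construct the clopen set $K_0\cap U$ by combining normality with compactness; the remaining steps are routine.
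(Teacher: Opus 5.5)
Your proof is correct. The paper gives no proof of this lemma; it quotes Lemma~1.21 of Bayart--Matheron and uses it as a black box in the proof of Kitai's theorem for $S_f$. So there is no in-paper argument to compare yours against.

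Your route is the standard one, and it is complete:
\begin{itemize}
\item You identify the component $C$ of $x$ with its quasi-component $Q$, the intersection of all clopen subsets of $\sigma$ containing $x$.
\item For $Q\subseteq C$, you separate a putative splitting $Q=A\cup B$ by disjoint open sets $U,V$. Compactness of $\sigma\setminus(U\cup V)$ then yields a clopen $K_0\subseteq U\cup V$ containing $x$, and $K_0\cap U=K_0\setminus V$ is clopen, which forces $B=\emptyset$.
\item A second compactness argument on $\sigma\setminus\Omega$ gives finitely many clopen sets whose intersection serves as $\sigma_1$.
\end{itemize}
Nothing specific to $\mathbb{C}$ enters beyond normality. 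You have in fact proved the statement for an arbitrary compact Hausdorff space $\sigma$, with $\Omega$ open in $\sigma$.

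Two details are left implicit, though neither is a gap. First, $A$ and $B$ are closed in $\sigma$, not just in $Q$, because $Q$ is an intersection of closed sets; this is what makes them compact. Second, in the degenerate case $\sigma\setminus(U\cup V)=\emptyset$ you should take $K_0=\sigma$, exactly as you do in the final step.
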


Recall that in order to distinguish the real from the complex case, we denote by $S_f$ the complex Lipschitz operator of $f$ that is defined on the complex Lipschitz-free space $\mathcal{F}(M, \mathbb{C})$.

With this preparation at hand, we are now able to state and prove Kitai's Theorem for Lipschitz operators.

\begin{thm} \label{Thm:KitaiForLipschitzOpsGeneral}
  Let $M$ be a separable complete pointed metric space, containing more than one point and $f\colon M\to M$ a topologically transitive Lipschitz mapping fixing the base point of $M$. Then every connected component of the spectrum of $S_f$ intersects the unit circle.
\end{thm}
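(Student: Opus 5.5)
Throughout, write $X=\mathcal{F}(M,\mathbb{C})$ and $S=S_f$. The plan is to follow the classical proof of Kitai's theorem. Where that proof uses a hypercyclic vector of $S$, which we do not have, we use the vectors $\delta_{\tilde x}$ for points $\tilde x$ whose $f$-orbit is dense in $M$. The two facts we rely on are:
\begin{itemize}
\item[(a)] $S^n\delta_{\tilde x}=\delta_{f^n(\tilde x)}$;
\item[(b)] the map $\delta$ is continuous and $\|\delta_y\|=d(y,0_M)$.
\end{itemize}
As in the proof of the first proposition of Section~\ref{ssec:EigenValues}, $M$ has no isolated points. Hence, by Birkhoff's theorem, the set $\mathcal{D}$ of points with dense orbit is dense in $M$. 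Moreover, for $\tilde x\in\mathcal{D}$ and any non-empty open $U$, the set $\{n: f^n(\tilde x)\in U\}$ is infinite.

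Suppose some connected component $C$ of $\sigma(S)$ misses $\mathbb{T}$. Since $C$ is connected, either $C\subset\{|z|<1\}$ or $C\subset\{|z|>1\}$. Apply Lemma~\ref{Lem:ClopenSuperset} with $\Omega$ equal to the corresponding open set. This gives a set $\sigma_1$, clopen in $\sigma(S)$, with $C\subseteq\sigma_1\subseteq\Omega$.

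If $\sigma_1=\sigma(S)$, take $X_1=X$ and $P=I$. Otherwise apply Proposition~\ref{prop:RieszDecomposition} with $\sigma_2=\sigma(S)\setminus\sigma_1$. This gives $X=X_1\oplus X_2$ with $X_1\neq\{0\}$. Let $P$ be the bounded projection onto $X_1$ along $X_2$. Since both summands are $S$-invariant, $P$ commutes with $S$. Finally, apply Lemma~\ref{ExponentialBound} with $r=1$ to $T=S|_{X_1}$.

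\textbf{Case 1: $\sigma_1\subset\{|z|<1\}$.} Lemma~\ref{ExponentialBound}(i) gives $\|P\delta_{f^n(\tilde x)}\|=\|S^nP\delta_{\tilde x}\|\le A(1-\varepsilon)^n\|P\delta_{\tilde x}\|\to0$.
\begin{itemize}
\item Fix $\tilde x\in\mathcal{D}$ and $y\in M$.
\item Choose $n_k\to\infty$ with $f^{n_k}(\tilde x)\to y$, which is possible by the remark on infinitely many return times.
\item By continuity of $P\circ\delta$, we get $P\delta_y=\lim_k P\delta_{f^{n_k}(\tilde x)}=0$.
\end{itemize}
Thus $P$ vanishes on $\delta(M)$. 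Since the span of $\delta(M)$ is dense in $X$, this gives $P=0$, contradicting $X_1\neq\{0\}$.

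\textbf{Case 2: $\sigma_1\subset\{|z|>1\}$.} First we find $\tilde x\in\mathcal{D}$ with $P\delta_{\tilde x}\neq0$. Since $P\neq0$ and the span of $\delta(M)$ is dense, some $y$ satisfies $P\delta_y\neq0$. By continuity, $P\delta$ is non-zero on a neighbourhood of $y$, and this neighbourhood meets the dense set $\mathcal{D}$.

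Lemma~\ref{ExponentialBound}(ii) then gives $\|P\delta_{f^n(\tilde x)}\|\ge A(1+\varepsilon)^n\|P\delta_{\tilde x}\|\to\infty$.

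On the other hand, pick infinitely many $n$ with $f^n(\tilde x)\in B_1(\tilde x)$. For these $n$ we have
\[
\|P\delta_{f^n(\tilde x)}\|\le\|P\|\,d(f^n(\tilde x),0_M)\le\|P\|\,(d(\tilde x,0_M)+1),
\]
which stays bounded, a contradiction.

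The main point needing care is replacing the hypercyclic vector in the classical argument by $\delta_{\tilde x}$. This works because of the identity $PS^n\delta_{\tilde x}=S^nP\delta_{\tilde x}$, the density of $\delta(M)$ in $X$, and the uniform bound $\|\delta_y\|=d(y,0_M)$ on bounded sets. One should also check that $\delta$ is still isometric in the complex case and that $\sigma(S)\neq\emptyset$. The latter holds because $X\neq\{0\}$, as $M$ has more than one point.
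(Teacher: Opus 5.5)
Your proposal is correct and follows essentially the same route as the paper: the clopen superset from Lemma~\ref{Lem:ClopenSuperset}, the Riesz decomposition (or $P=I$), the intertwining $P\delta_{f^n(x)}=S_f^nP\delta_x$, and the two cases of Lemma~\ref{ExponentialBound}, each refuted using a dense $f$-orbit. The differences are only cosmetic. In the expanding case you explicitly choose $\tilde x\in\mathcal{D}$ with $P\delta_{\tilde x}\neq 0$, whereas the paper lets $\phi(\tilde x)=0$ fall under the decaying behaviour. You also use returns of the orbit to the bounded ball $B_1(\tilde x)$, rather than returns to $0_M$ where $\phi(0_M)=0$.
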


\begin{proof}
We argue by contradiction and assume that the spectrum $\sigma(S_f)$ has a connected component $C$ that does not intersect the unit circle. Therefore, $C$ either lies entirely in the open unit disk $\mathbb{D}$ or the open set $\mathbb{C} \setminus \overline{\mathbb{D}}$. Lemma~\ref{Lem:ClopenSuperset} allows us to find a clopen subset $\sigma_1$ of $\sigma(S_f)$ that contains $C$ and also lies entirely in  $\mathbb{D}$ or $\mathbb{C} \setminus \overline{\mathbb{D}}$, respectively. If we have the equality $\sigma_1=\sigma(S_f)$, we set $P \coloneqq \mathrm{id}_{\mathcal{F}(M, \mathbb{C})}$,  $X_1 \coloneqq \mathcal{F}(M, \mathbb{C})$ and $T \coloneqq S_f$. Otherwise, the clopenness of $\sigma_1$ in $\sigma(S_f)$  implies that $\sigma_1$ and $\sigma(S_f) \setminus \sigma_1$ form a partition of $\sigma(S_f)$ into two nonempty closed subsets of $\sigma(S_f)$. The Riesz decomposition theorem implies the existence of nontrivial, $S_f$-invariant, complemented and closed subspaces $X_1, X_2 \subset \mathcal{F}(M, \mathbb{C})$ with $\mathcal{F}(M,\mathbb{C})=X_1\oplus X_2$ and $\sigma(S_f \big \vert _ {X_1})=\sigma_1$. In this case we denote by $P$ the corresponding projection onto $X_1$ and set $T \coloneqq S_f \big \vert_{X_1}$.

In both cases we continue with the same arguments to arrive at the contradiction. We consider the mapping
  \begin{equation*}
    \phi \colon M \to X_1, \qquad x \mapsto P(\delta_x).
  \end{equation*}

Because a non-singleton metric space with isolated points does not admit a topologically transitive continuous mapping with a fixed point, see Ex.~1.2.3 in~\cite[p.~25]{LinearChaos},  $M$~cannot have isolated points. Therefore, Birkhoff's transitivity theorem allows us to pick a point $\tilde{x}\in M$ with dense orbit under~$f$. Since the subspaces $X_1$ and $X_2$ are invariant under $S_f$, the operators $P$ and $S_f$ commute and we have
  \begin{equation} \label{PhiMapsOrbit}
    \phi(f^n(x))=P(\delta_{f^n(x)})=P(S_f^n(\delta_x))=S_f^n(P(\delta_x))=T^n(\phi(x))
  \end{equation}
for all $n \geq0$ and all $x \in M$. Since the set $\sigma_1=\sigma(T)$ lies either entirely inside the open unit disk or entirely in the complement of the closed unit disk, Lemma~\ref{ExponentialBound} implies that either all orbits under $T$ converge to $0$ or all orbits of nonzero vectors under $T$ go to infinity. In particular, the vector $\phi(\tilde{x})$ must exhibit one of these two behaviors. Let us first derive a contradiction from behavior number one, i.e. the case where
\begin{equation*}
    \underset{n \to \infty}{\lim} T^n(\phi(\tilde{x}))=0.
\end{equation*}
Fix an arbitrary point $y \in M$. Because the orbit of $\tilde{x}$ under $f$ is dense and $M$ does not have isolated points, there exists an increasing sequence of integers $(n_k)_{k \in \mathbb{N}}$ such that
\begin{equation*}
    \underset{k \to \infty}{\lim} f^{n_k}(\tilde{x})=y.
\end{equation*}
By continuity of $\phi$ we have
\begin{equation*}
0=\underset{n \to \infty}{\lim} T^n(\phi(\tilde{x}))= \underset{k \to \infty}{\lim} T^{n_k}(\phi(\tilde{x}))\overset{\eqref{PhiMapsOrbit}}{=}\underset{k \to \infty}{\lim} \phi( f^{n_k}(\tilde{x}))=\phi(y).
\end{equation*}
Because $y$ is an arbitrary point in $M$, we have shown that $\phi=P \circ \delta$ vanishes on $M$.  Since the set $\{\delta_x\colon x\in M\}$ is a total subset of $\mathcal{F}(M,\mathbb{C})$ this implies that $P \equiv 0$ and thus $X_1=\{0\}$. From the Riesz decomposition theorem we obtained that $X_1$ must be non-trivial. Hence, we found a contradiction.\\
Now we consider behavior number two, i.e. the case
\begin{equation*}
    \underset{n \to \infty}{\lim} \| T^n(\phi(\tilde{x}))\| =\infty.
\end{equation*}
Here we use the facts that $\mathrm{orb}(\tilde{x},f)$ is dense in $M$ and that $M$ does not have isolated points to choose a sequence $(n_k)_{k \in \mathbb{N}}$ such that
\begin{equation*}
    \underset{k \to \infty}{\lim} f^{n_k}(\tilde{x})=0.
\end{equation*}
The continuity of $\| \phi(\cdot)\|$ allows us to conclude that
\begin{equation*}
    \infty =\underset{n \to \infty}{\lim} \| T^n(\phi(\tilde{x}))\| = \underset{k \to \infty}{\lim} \| T^{n_k}(\phi(\tilde{x}))\| \overset{\eqref{PhiMapsOrbit}}{=} \underset{k \to \infty}{\lim} \| \phi(f^{n_k}(\tilde{x})) \|= \| \phi(0) \| =0
\end{equation*}
which is a contradictory statement. 
\end{proof}

\subsection{Power-compactness, power-boundedness and paranormality}
In what follows we look at three classes of operators on Banach spaces that can never be hypercyclic. We show that a transitive $f$ cannot induce a Lipschitz operator with either one of these properties.

\begin{Def}[Power-compactness, power-boundedness, paranormality]
  An operator $T$ on a Banach space is called \textit{power-compact} if some iterate $T^n$ is compact. $T$ is called \textit{power-bounded} if \[\underset{n\geq 0}{\mathrm{sup}}\|T_f^n\|< \infty.\] $T$ is called \textit{paranormal} if for all $x \in X$ it satisfies \[\|Tx\|^2 \leq \|T^2x\|\|x\|.\]
\end{Def}

We start with the class of power-compact operators. In Theorem~A in~\cite[p.~1004]{ACP2023:CompactWeaklyCompactLipschitzOperators} compactness of the Lipschitz operator $T_f$ is characterised in terms of properties of the Lipschitz mapping $f\colon M\to M$. In particular, the authors show that for $T_f$ to be compact it is necessary that $f$ is uniformly locally flat, i.e. it has to satisfy the condition
\begin{equation} \label{UniformLocalFlatness}
  \lim_{\delta \to 0^+} \sup \Big \{\frac{d(f^n(x), f^n(y))}{d(x,y)} \ \ \Big \vert \ \ x,y \in M, \ 0<d(x,y)< \delta \Big \}=0.
\end{equation}
We now show that no iterate of a topologically transitive base-point-preserving Lipschitz mapping can have this property.

\begin{lem} \label{Lemma:NoIterateUniformlyLocFlat}
  Let $M$ be a separable complete pointed metric space, containing more than one point and $f\colon M\to M$ a topologically transitive Lipschitz mapping fixing the base point of $M$. Then no iterate of $f$ is uniformly locally flat.
\end{lem}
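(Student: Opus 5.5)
We argue by contradiction. Suppose that for some $N\in\mathbb{N}$ the iterate $g\coloneqq f^N$ is uniformly locally flat in the sense of \eqref{UniformLocalFlatness}. The plan is to show that $g$ then contracts a small ball around the base point $0$ into itself. Consequently the whole forward $f$-orbit of that ball stays in a slightly larger ball around $0$. Since $M$ has a point $y\neq 0$, we can choose a small open set near $y$ that this orbit never reaches, which contradicts topological transitivity.

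\emph{Step 1 (contraction near the base point).} Fix $\varepsilon\in(0,1)$. By uniform local flatness there is $\delta>0$ with $d(g(x),g(z))\le \varepsilon\, d(x,z)$ whenever $0<d(x,z)<\delta$. Take $z=0$ and use $g(0)=0$. Then for $x\in B_r(0)$ with $r\le\delta$ we get $d(g(x),0)\le \varepsilon\, d(x,0)<\varepsilon r$. So $g(B_r(0))\subset B_{\varepsilon r}(0)\subset B_r(0)$. Iterating gives $g^k(B_r(0))\subset B_{\varepsilon^k r}(0)\subset B_r(0)$ for all $k\ge 0$.

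\emph{Step 2 (control of all iterates of $f$).} Let $L\coloneqq\max\{1,\Lip(f)\}$ and $C\coloneqq L^{N-1}\ge 1$. Every $m\ge 0$ can be written as $m=kN+j$ with $0\le j<N$, so $f^m=f^j\circ g^k$. Since $f^j$ fixes $0$ and is $L^j$-Lipschitz, for $x\in B_r(0)$ we obtain
\[
d(f^m(x),0)\le L^j\, d(g^k(x),0)< C r .
\]
Hence $f^m(B_r(0))\subset B_{Cr}(0)$ for all $m\ge 0$.

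\emph{Step 3 (contradiction).} Because $M$ contains more than one point, pick $y\in M\setminus\{0\}$. Choose $r\in(0,\delta]$ so small that $2Cr<d(y,0)$. Put $U\coloneqq B_r(0)$ and $V\coloneqq B_{Cr}(y)$. These are non-empty open sets, and $V$ is disjoint from $B_{Cr}(0)$ by the triangle inequality. By Step 2, $f^m(U)\cap V=\emptyset$ for every $m$, which contradicts the topological transitivity of $f$.

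I expect no serious obstacle. The only points needing care are that $g=f^N$ itself need not be transitive, and that uniform local flatness is only a local statement. Step 2 handles the first point by interpolating the intermediate iterates $f^j$, $j<N$, with a uniform Lipschitz bound. The second is handled by centring the comparison at the fixed base point and keeping $r\le\delta$, so that Step 1 applies at every stage of the iteration. Separability and completeness of $M$ are not needed for this argument.
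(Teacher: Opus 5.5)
Your proof is correct and follows essentially the same route as the paper: flatness of $f^N$ with a contraction factor below $1$ traps a small ball around the base point under $f^N$, the intermediate iterates $f^j$, $j<N$, are controlled by $\Lip(f)^j$, and so the whole $f$-orbit of that ball misses a nonempty open set away from $0$. The differences are cosmetic. The paper treats $\Lip(f)\le 1$ as a separate case and uses the specific factor $\Lip(f)^{-n}$ with $V=M\setminus\overline{B_\delta(0)}$, whereas you absorb both cases via $\max\{1,\Lip(f)\}$, allow any factor $\varepsilon<1$, and take $V$ to be a small ball around a point $y\neq 0$.
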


\begin{proof}
  We assume that $f^n$ is uniformly locally flat and argue towards a contradiction. We set $L \coloneqq \Lip(f)$ and now distinguish two cases.

  For the first case, we assume that $L\leq 1$. Because $M$ is not a singleton, we can choose an $\varepsilon>0$ such that the disjoint open sets $B_{\varepsilon}(0)$ and $M \setminus \overline{B_{\varepsilon}(0)}$ are both nonempty. Now the conditions $L\leq 1$ and $f(0)=0$ imply that $B_{\varepsilon}(0)$ is an $f$-invariant set, contradicting topological transitivity of $f$.

  For the second case, we assume $L>1$. By the uniform local flatness, see~\eqref{UniformLocalFlatness}, there is a $\delta>0$ with 
  \begin{equation*}
    d(f^n(x),f^n(y) \leq \frac{1}{L^n} d(x,y).
  \end{equation*}
  for all $x,y\in M$ with $d(x,y)<\delta$. Note that we may choose $\delta$  small enough so that the open sets $U \coloneqq B_{\delta / L^n}(0)$ and $V \coloneqq M \setminus \overline{B_{\delta}(0)}$ are nonempty whereas the condition $L>1$ ensures that the sets are disjoint. Since every $x \in U$ satisfies
  \begin{equation*}
    d(f^n(x),0) = d(f^n(x), f^n(0)) \leq \frac{1}{L^n} d(x,0)< \frac{\delta}{L^{2n}}<\frac{\delta}{L^n},
  \end{equation*}
  the set~$U$ is invariant under~$f$. Now we fix an arbitrary $m \in \mathbb{N}_0$ and represent it as $m=nk+r$, with $k \in \mathbb{N}_0$ and $r \in \{0, \dots , n-1\}$. Every $x \in U$ satisfies
  \begin{equation*}
    d(f^m(x),0)=d(f^r(f^{nk}(x)),f^r(0))\leq L^r d(f^{nk}(x),0)<L^r \frac{\delta}{L^n} \leq \delta \ ,
  \end{equation*}
  where the penultimate inequality uses the fact that $U$ is $f^n$-invariant. We have shown that $f^m(U) \cap V = \emptyset$. Since $m$ was arbitrary, this contradicts the transitivity of~$f$.
\end{proof}

\begin{cor}
  Let $M$ be a separable complete pointed metric space, containing more than one point and $f\colon M\to M$ a topologically transitive Lipschitz mapping fixing the base point of $M$. Then $T_f$ is not power-compact.
\end{cor}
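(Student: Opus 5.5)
The plan is a direct reduction to Lemma~\ref{Lemma:NoIterateUniformlyLocFlat} via the characterisation of compact Lipschitz operators quoted above. Suppose, towards a contradiction, that $T_f$ is power-compact, so that $T_f^n$ is compact for some $n\in\mathbb{N}$.

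The first step is to identify $T_f^n$ as a Lipschitz operator. It suffices to observe that $T_f^n = T_{f^n}$. This holds because both operators are bounded and linear, and on the total set $\delta(M)$ they agree:
\[
T_f^n\delta_x = \delta_{f^n(x)} = T_{f^n}\delta_x .
\]
Moreover, $f^n$ is a base-point-preserving Lipschitz map $M\to M$. Hence $T_{f^n}$ is the Lipschitz operator induced by $f^n$.

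The second step is to invoke Theorem~A of~\cite{ACP2023:CompactWeaklyCompactLipschitzOperators}. Its necessary condition for compactness says that $f^n$ must be uniformly locally flat, i.e.\ condition~\eqref{UniformLocalFlatness} holds for the map $f^n$. This contradicts Lemma~\ref{Lemma:NoIterateUniformlyLocFlat}, which states that no iterate of a topologically transitive base-point-preserving Lipschitz map is uniformly locally flat. Therefore no iterate $T_f^n$ is compact, and $T_f$ is not power-compact.

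The only step needing care is the applicability of the cited theorem. Theorem~A concerns the real Lipschitz-free space $\mathcal{F}(M)$, which is exactly the setting of $T_f$, so no complexification issue arises. I would also check that the case $n=0$ is excluded by the definition ($n\in\mathbb{N}$), or else handled separately. If it were allowed, $T_f^0=\mathrm{Id}$ is compact only when $\mathcal{F}(M)$ is finite dimensional. But $M$ is infinite, since it has no isolated points (as noted in the proofs above), so $\mathcal{F}(M)$ is infinite dimensional and $\mathrm{Id}$ is not compact.
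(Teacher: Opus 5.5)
Your proof is correct and is essentially the paper's argument: write $T_f^n=T_{f^n}$, use the necessity of uniform local flatness from Theorem~A of Abbar--Coine--Petitjean, and contradict Lemma~\ref{Lemma:NoIterateUniformlyLocFlat}. Your justification of $T_f^n=T_{f^n}$ via totality of $\delta(M)$ and your remark on $n=0$ are harmless details that the paper leaves implicit.
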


\begin{proof}
  By Theorem A in~\cite[p.~1004]{ACP2023:CompactWeaklyCompactLipschitzOperators}, if an iterate $T_f^n=T_{f^n}$ is compact, then $f^n$ is uniformly locally flat. This is impossible by Lemma~\ref{Lemma:NoIterateUniformlyLocFlat}.
\end{proof}

Next we show that no topologically transitive~$f$ can have a power-bounded linearisation~$T_f$.

\begin{prop}
  Let $M$ be a separable complete pointed metric space, containing more than one point and $f\colon M\to M$ a topologically transitive Lipschitz mapping fixing the base point of $M$. Then $T_f$ is not power-bounded.
\end{prop}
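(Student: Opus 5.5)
We argue by contradiction and suppose $R \coloneqq \sup_{n\geq 0}\|T_f^n\| < \infty$. Since $T_f^n = T_{f^n}$ and $\|T_{g}\| = \Lip(g)$ by the universal property, this says exactly that $\Lip(f^n)\leq R$ for all $n\geq 0$. The strategy mirrors the first case in the proof of Lemma~\ref{Lemma:NoIterateUniformlyLocFlat}: a uniform bound on the Lipschitz constants of all iterates produces a small neighbourhood of the base point whose forward orbit stays near $0$. This is incompatible with transitivity.

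Concretely, since $M$ has more than one point, we fix a point $y\neq 0_M$ and choose $\varepsilon>0$ with $\varepsilon < d(y,0_M)$. Then the open sets
\[
U \coloneqq B_{\varepsilon/R}(0_M), \qquad V \coloneqq M\setminus \overline{B_{\varepsilon}(0_M)}
\]
are both nonempty; $V$ contains $y$ once $\varepsilon$ is small enough. We may take $R\geq 1$, since $\Lip(f^0)=1$. For every $x\in U$ and every $m\geq 0$, using $f^m(0_M)=0_M$, we get
\[
d(f^m(x),0_M) = d(f^m(x),f^m(0_M)) \leq \Lip(f^m)\, d(x,0_M) < R\cdot \frac{\varepsilon}{R} = \varepsilon .
\]
Hence $f^m(U)\subset B_\varepsilon(0_M)$, which is disjoint from $V$, so $f^m(U)\cap V=\emptyset$ for all $m$. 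This contradicts the topological transitivity of $f$.

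Alternatively, one could invoke the previous proposition on unbounded orbits of $C_f=T_f^*$: power-boundedness of $T_f$ gives $\|C_f^n g\|\leq R\|g\|$ for all $g$, so every orbit of $C_f$ is bounded. Since $\Lip_0(M)\neq\{0\}$ for a non-singleton $M$ (take $g=d(\cdot,0_M)$), this contradicts that proposition. The only point requiring care is the identification $\|T_f^n\|=\Lip(f^n)$, or equivalently $\|T^*\|=\|T\|$. Both are standard, so no genuine obstacle is expected; the direct argument above is self-contained and is the one I would write out.
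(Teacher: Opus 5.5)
Your direct argument is correct and is essentially the paper's proof: the paper also uses $\sup_n \Lip(f^n)<\infty$ (normalised to $s=\max\{1,\sup_n\Lip(f^n)\}$) to show that no iterate of $f$ maps $B_\delta(0)$ into $M\setminus\overline{B_{s\delta}(0)}$, which is your construction up to rescaling the radii by $R$. Your alternative route is also valid: the unbounded-orbit proposition for $C_f=T_f^*$ applies via $\|(T_f^*)^n\|=\|T_f^n\|$, with $d(\cdot,0_M)\in\Lip_0(M)\setminus\{0\}$ as a nonzero function.
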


\begin{proof}
  Power-boundedness of $T_f$ implies that
  \begin{equation*}
    \sup_{n \geq0}\|T_f^n\|=\sup_{n \geq0}\|T_{f^n}\|= \sup_{n \geq0} \Lip(f^n) < \infty
  \end{equation*}
  We define $s \coloneqq \max \{1, \sup_{n \geq0} \Lip(f^n)\}$ and use that $M$ is not a singleton to be able to pick a $\delta>0$ for which the open sets $U\coloneqq B_{\delta}(0)$ and $ V \coloneqq M \setminus \overline{B_{s\delta}}$ are nonempty. Note that $s\geq1$ ensures that the sets are disjoint. Now, for $x \in U$ and $n \geq 0$ we have
  \begin{equation*}
    d(f^n(x),0)= d(f^n(x),f^n(0)) \leq s d(x,0)<s \delta
  \end{equation*}
  and therefore $f^n(x) \notin V$ for all $n \geq 0$. Hence $f$ cannot be topologically transitive.
\end{proof}

\begin{prop}
  Let $M$ be a separable complete pointed metric space, containing more than one point and $f\colon M\to M$ a topologically transitive Lipschitz mapping fixing the base point of $M$. Then $T_f$ is not paranormal.
\end{prop}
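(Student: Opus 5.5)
The plan is to test paranormality only on the point masses $\delta_y$, $y\in M$, for which $\|T_f^n\delta_y\|=\|\delta_{f^n(y)}\|=d(f^n(y),0)$, since $\delta$ is an isometric embedding with $\delta_0=0$. Suppose for contradiction that $T_f$ is paranormal. Applying the paranormality inequality to the vector $T_f^n\delta_y$ gives
\[
d(f^{n+1}(y),0)^2\leq d(f^{n+2}(y),0)\, d(f^n(y),0)\qquad\text{for all } n\geq 0 .
\]
Hence, as long as the distances are nonzero, the ratios $r_n:=d(f^{n+1}(y),0)/d(f^n(y),0)$ form a nondecreasing sequence. This is the standard monotonicity of the ratio $\|T^{n+1}x\|/\|T^nx\|$ for paranormal operators.

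Next I fix a suitable point $y$. As in the proof of Theorem~\ref{Thm:KitaiForLipschitzOpsGeneral}, $M$ has no isolated points, so by Birkhoff's transitivity theorem the set of points with dense orbit is dense in $M$. Since $M$ is not a singleton, I pick $z\in M\setminus\{0\}$ and then a point $y$ with dense orbit and $0<d(y,0)<d(z,0)/2$. The orbit of $y$ never hits $0$. Otherwise it would be eventually $0$, hence finite, and a finite set is not dense in a space without isolated points. So all $r_n$ are defined, positive, and nondecreasing.

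I then split into two cases. If $r_n\leq 1$ for all $n$, then $d(f^n(y),0)\leq d(y,0)<d(z,0)/2$ for all $n$. The orbit therefore never enters $B_{d(z,0)/2}(z)$, which contradicts density. If instead $r_k>1$ for some $k$, then $r_n\geq r_k>1$ for all $n\geq k$, so $d(f^n(y),0)\geq r_k^{\,n-k}d(f^k(y),0)\to\infty$. The orbit then eventually leaves every ball around $0$, so only finitely many of its points lie in, say, $B_{d(y,0)}(0)$. But density of the orbit of $f^m(y)$ for every $m$ (again since $M$ has no isolated points) forces infinitely many orbit points in that nonempty open set, a contradiction.

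The only delicate points are making sure the ratios are well defined, by excluding $f^n(y)=0$, and choosing $y$ close to $0$ so that the case of nonincreasing distances genuinely contradicts density. Both are handled above, so I expect no serious obstacle.
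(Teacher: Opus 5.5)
Your proof is correct and takes the same route as the paper. You apply paranormality along the orbit of $\delta_y$ for a point $y$ with dense orbit, obtain the dichotomy that the norms are either nonincreasing or eventually increasing, and contradict the fact that a dense orbit in a space without isolated points enters every nonempty open set infinitely often. The difference is only in detail: you derive explicitly the monotone-ratio dichotomy that the paper imports from the proof of Theorem~5.30 in Linear Chaos, and choosing $y$ close to $0$ makes the nonincreasing case immediate.
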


\begin{proof}
  As observed before, the existence of a transitive base-point-preserving Lipschitz map on the non-singleton space $M$ implies that $M$ cannot have isolated points. Hence Birkhoff's transitivity theorem allows us to choose a point $\tilde{x} \in M$ with dense orbit under $f$.

  Now suppose $T_f$ is paranormal. Exactly as in the proof of Theorem 5.30 in~\cite[p.~153]{LinearChaos}, the paranormality condition implies that any orbit is either decreasing in norm or eventually strictly increasing. Clearly, the orbit of $\delta_{\tilde{x}}$ can have neither of these properties, because it must visit every nonempty open subset of $\delta(M)\subset\mathcal{F}(M)$ infinitely often.
\end{proof}

\section{An observation about Devaney Chaos}
If $M$ is separable and without isolated points, then the combination of transitivity and the existence of a dense set of periodic points is known as \textit{Devaney chaos}. As for topological transitivity, it is not known whether the property of Devaney chaos is inherited from $f$ to $T_f$ without the additional assumption of weak mixing. The existence of a dense set of periodic points is a property that is passed from $f$ to $T_f$. Therefore, general inheritance of transitivity would also imply general inheritance of Devaney chaos. Proposition~5.7 in \cite[p.~140]{LinearChaos} says that among the eigenvalues of a Devaney chaotic operator on a complex Banach space there are infinitely many roots of unity. Moreover, the spectrum has no isolated points.

Inspired by this statement, we show the following two propositions that can be viewed as evidence for the general transfer of Devaney chaos from $f$ to $T_f$, at least if the underlying space $M$ is connected.

\begin{prop} \label{fDevaneyPointSpecRootsOfUnity}
  Let $M$ be separable without isolated points and $f$ Devaney chaotic. Then the point spectrum of $S_f$ contains infinitely many roots of unity.
\end{prop}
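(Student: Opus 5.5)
The idea is to build eigenvectors of $S_f$ explicitly from periodic orbits of $f$, as finite combinations of Dirac measures weighted by powers of a root of unity. Then we show that the periods occurring are unbounded, which yields infinitely many distinct roots of unity.

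\textbf{Step 1 (eigenvectors from a single periodic orbit).} Let $x\in M$ be a periodic point of $f$ with exact period $p$ and $x\neq 0_M$. Then the points $x, f(x),\dots,f^{p-1}(x)$ are pairwise distinct. They are also all different from $0_M$: since $f(0_M)=0_M$, if some $f^k(x)=0_M$ then $x=f^{p}(x)=f^{p-k}(0_M)=0_M$. Let $\omega\in\mathbb{C}$ be any $p$-th root of unity and set
\[
\mu_\omega:=\sum_{k=0}^{p-1}\omega^{-k}\,\delta_{f^k(x)}\in\mathcal{F}(M,\mathbb{C}).
\]
Using $S_f\delta_y=\delta_{f(y)}$, $f^p(x)=x$ and $\omega^{-p}=1$, we get
\[
S_f\mu_\omega=\sum_{k=0}^{p-1}\omega^{-k}\delta_{f^{k+1}(x)}=\omega\sum_{j=1}^{p}\omega^{-j}\delta_{f^{j}(x)}=\omega\,\mu_\omega .
\]
It remains to check that $\mu_\omega\neq 0$. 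For this we use that finitely many Dirac measures at distinct points of $M\setminus\{0_M\}$ are linearly independent in $\mathcal{F}(M,\mathbb{C})$. Indeed, for each $j$ a Lipschitz bump $g_j(y)=\max\{0,r-d(y,f^j(x))\}$ with $r$ smaller than half the minimal mutual distance of the points $0_M, x, \dots, f^{p-1}(x)$ lies in $\Lip_0(M)$. It satisfies $\langle g_j,\mu_\omega\rangle=r\,\omega^{-j}\neq 0$. Hence every $p$-th root of unity lies in the point spectrum of $S_f$.

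\textbf{Step 2 (the periods are unbounded).} Since $M$ has no isolated points, and is nonempty with more than one point in the intended setting, it is infinite. Periodic points are dense, so there are periodic points different from $0_M$. Suppose all periodic points had period at most $N$. Then all of them are fixed by $f^{N!}$, and by density and continuity $f^{N!}=\mathrm{id}_M$. We contradict transitivity without using completeness. Pick $x\in M$ and let $F=\{x,f(x),\dots,f^{N!-1}(x)\}$, which is the whole (finite) orbit of $x$. Choose $y\notin F$ and $\varepsilon>0$ with $2\varepsilon<\operatorname{dist}(y,F)$. By continuity of the finitely many maps $f^k$, $0\le k<N!$, there is a neighbourhood $U$ of $x$ with $f^k(U)\subset B_\varepsilon(f^k(x))$ for these $k$. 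Since $f^{N!}=\mathrm{id}$, this gives $f^n(U)\subset B_\varepsilon(F)$ for every $n$. So $f^n(U)\cap B_\varepsilon(y)=\emptyset$ for all $n$, contradicting topological transitivity. Hence there are nonzero periodic points of arbitrarily large period $p$.

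\textbf{Conclusion and main obstacle.} For each such period $p$, Step 1 puts all $p$-th roots of unity, in particular a primitive one, into the point spectrum of $S_f$. Unbounded $p$ therefore gives infinitely many distinct roots of unity. The only delicate point is Step 2: excluding the scenario that all periods are bounded using only transitivity and density of periodic points, without completeness. The continuity argument on the finite orbit above handles it. The nonvanishing of $\mu_\omega$ is routine once the orbit points are known to be distinct and nonzero.
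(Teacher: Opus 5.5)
Your proof is correct, and it takes a different, more hands-on route than the paper. The paper argues abstractly, following the classical proof for Devaney chaotic operators. It uses three ingredients: $S_f$ inherits a dense set of periodic points from $f$; $\mathrm{Per}(S_f)$ is the span of eigenvectors whose eigenvalues are roots of unity; and every $S_f-\lambda I$ has dense range, by the complex version of the no-eigenvalue result for $C_f$. If only finitely many roots of unity $\lambda_1,\dots,\lambda_N$ were eigenvalues, then $\prod_j (S_f-\lambda_j I)$ would vanish on a dense set and still have dense range, which is absurd.

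You instead write the eigenvectors down explicitly as $\mu_\omega=\sum_{k<p}\omega^{-k}\delta_{f^k(x)}$, a discrete Fourier transform of a periodic orbit, and check $\mu_\omega\neq 0$ with bump functions. You then use the $f^{N!}=\mathrm{id}_M$ argument to show that transitivity plus dense periodic points on an infinite space force periodic points of unbounded least period.

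What your route buys:
\begin{itemize}
\item It is elementary and self-contained.
\item It gives sharper information: every $p$-th root of unity is an eigenvalue whenever $f$ has a point of least period $p$.
\item It needs neither completeness nor Birkhoff's transitivity theorem. The paper's route depends on its dense-range remark, whose underlying proof obtains the dense set of transitive points from Birkhoff's theorem. That implicitly requires completeness, which is not among the stated hypotheses.
\end{itemize}

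What the paper's route buys: it is shorter given the earlier results. It also shows the conclusion follows from two structural properties of $S_f$ alone, dense periodic points and dense range of $S_f-\lambda I$, exactly as in the linear theory.
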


\begin{proof}
  Because $S_f$ is an extension of $f$, $S_f$ has a periodic point. In fact, it is easily checked that $S_f$ inherits the fact that periodic points are dense from $f$. In the real case, this is Proposition 2.6 in~\cite[p.~45]{Abbar2021Dynamics} and the proof also works for the complex case. Now we use the existence of a periodic point together with the fact that
  \begin{equation} \label{PeriodicPointsFormula}
    \mathrm{Per}(S_f)= \mathrm{span}\{\mu \in \mathcal{F}(M, \mathbb{C}) \ \vert \ S_f \mu =e^{\alpha \pi i }\mu \;\text{for some}\; \alpha\in\mathbb{Q}\}\ ,
  \end{equation}
  which is Proposition~2.33 in \cite[p.~43]{LinearChaos}. We assume that the point spectrum contains exactly $N$ roots of unity, denoted by $\lambda_1, \cdots, \lambda_N$, and argue towards a contradiction. This implies that 
  \begin{equation*}
    p(S_f)=(S_f- \lambda_1 I)\dots (S_f- \lambda_N I)
  \end{equation*}
  maps any eigenvector to 0. Note that the factors in the expression commute. From~\eqref{PeriodicPointsFormula} we may conclude that this operator vanishes on the dense set of periodic points. Therefore, the operator is the zero operator. This is a contradiction to Remark~\ref{ComplexCaseEVofAdjoint}, since every factor in p(T) has dense range and thus p(T) must have dense range.
\end{proof}

\begin{prop} \label{fDevaneyFullSpecIsoPoints}
  Let $M$ be a connected separable complete pointed metric space, containing more than one point and $f\colon M\to M$ a Devaney chaotic Lipschitz mapping fixing the base point of $M$.
  Then the spectrum of $S_f$ has no isolated points.
\end{prop}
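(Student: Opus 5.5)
Suppose $\lambda$ is an isolated point of $\sigma(S_f)$; we aim for a contradiction. Then $\{\lambda\}$ is clopen in $\sigma(S_f)$. If $\sigma(S_f)=\{\lambda\}$, set $P:=\mathrm{id}$ and $X_1:=\mathcal{F}(M,\mathbb{C})$. Otherwise, Proposition~\ref{prop:RieszDecomposition} gives a nontrivial $S_f$-invariant closed subspace $X_1$ with complement $X_2$ and $\sigma(S_f|_{X_1})=\{\lambda\}$; let $P$ be the projection onto $X_1$, which commutes with $S_f$. Since $f$ is transitive, Theorem~\ref{Thm:KitaiForLipschitzOpsGeneral} forces the component $\{\lambda\}$ to meet $\mathbb{T}$, so $|\lambda|=1$. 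As in the proof of Theorem~\ref{Thm:KitaiForLipschitzOpsGeneral}, put $\phi(x):=P\delta_x$ and $T:=S_f|_{X_1}$. Then $\phi$ is Lipschitz, $\phi(0_M)=0$, $\phi(f^n x)=T^n\phi(x)$, and $\phi(M)$ is total in $X_1$.

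Next we use that $T$ has spectrum $\{\lambda\}$ with $|\lambda|=1$, so $T$ has spectral radius one and $T^{-1}$ exists with spectral radius one. Periodic points of $f$ are dense. If $p$ has period $k$, then $T^k\phi(p)=\phi(p)$, so $\phi(p)$ lies in $\ker(T^k-I)$. This kernel is a sum of eigenspaces for the $k$-th roots of unity $\omega$, and $T-\omega I$ is invertible on $X_1$ unless $\omega=\lambda$. Hence $\phi(p)$ lies in $\ker(T-\lambda I)$, with $\phi(p)=0$ unless $\lambda^k=1$. By density of periodic points and continuity of $\phi$, the set $\phi(M)$ lies in the closed subspace $\ker(T-\lambda I)$. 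Since $\phi(M)$ is total in $X_1$, we get $T=\lambda I$ on $X_1$.

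Now $\phi(f(x))=\lambda\phi(x)$ for all $x$, so $\|\phi(f^n x)\|=\|\phi(x)\|$ along every orbit. Take $\tilde x$ with dense orbit. Its orbit comes arbitrarily close to $0_M$, where $\phi$ vanishes, so $\|\phi(\tilde x)\|=0$. Hence $\phi$ vanishes on the dense set of transitive points, so $\phi\equiv0$, so $P=0$ and $X_1=\{0\}$. This contradicts the nontriviality of $X_1$. In the case $X_1$ is the whole space it contradicts $\delta_x\neq0$, since $M$ has a point other than $0_M$.

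The main obstacle is the step $\ker(T^k-I)\subseteq\ker(T-\lambda I)$. This needs the factorization $T^k-I=\prod_{\omega}(T-\omega I)$, in which all factors except possibly $T-\lambda I$ are invertible. The step appears not to need connectedness of $M$. If it fails, the fallback is to use connectedness as follows. The map $x\mapsto\phi(x)$ together with $|\lambda|=1$ makes $x\mapsto\|\phi(x)\|$ an $f$-invariant continuous function. Such a function must be constant on the closure of a dense orbit, hence identically $\|\phi(0_M)\|=0$.
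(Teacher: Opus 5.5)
Your proof is correct, and it takes a different route from the paper in two places.

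\emph{The middle step.} The paper splits into two cases:
\begin{itemize}
\item If $\lambda$ is not a root of unity, then $T^k-I$ is invertible, so $\phi(p)=0$.
\item If $\lambda$ is a root of unity, it writes $T^m=I+N$ with $N$ quasinilpotent. It then uses $(1+t)^k-1=kt(1+q(t))$, with $I+q(N)$ invertible, to get $N\phi(p)=0$ and hence $T^m=I$.
\end{itemize}
You handle both cases at once through $T^k-I=\prod_{\omega^k=1}(T-\omega I)$. Every factor except possibly $T-\lambda I$ is invertible because $\sigma(T)=\{\lambda\}$. This gives $\ker(T^k-I)\subseteq\ker(T-\lambda I)$, and hence the stronger conclusion $T=\lambda I$ on $X_1$.

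\emph{The endgame.} The paper uses connectedness of $\phi(M)$ together with the finiteness of the $T$-orbit of $\phi(\tilde x)$. You use $|\lambda|=1$, the resulting invariance of $\|\phi\|$ along $f$-orbits, and $\phi(0_M)=0$.

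Your route is shorter and shows that the connectedness hypothesis is superfluous. The paper's own endgame does not need it either: $0=\phi(0_M)$ lies in the closed finite set $\{\phi(\tilde x),\dots,T^{m-1}\phi(\tilde x)\}$, and $T$ is invertible. You also treat the case $\sigma(S_f)=\{\lambda\}$ explicitly. There the Riesz decomposition does not apply, and the paper's proof of this proposition passes over it.

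Two small remarks. First, the appeal to Theorem~\ref{Thm:KitaiForLipschitzOpsGeneral} is unnecessary. Your factorisation already gives $\phi\equiv 0$ by density of periodic points when $\lambda$ is not a root of unity, and $|\lambda|=1$ is automatic when it is one. Second, drop the ``fallback'' paragraph. The $f$-invariance of $\|\phi\|$ it relies on presupposes $T=\lambda I$, which is exactly the conclusion of the step it is meant to replace. It also never actually uses connectedness.
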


\begin{proof}
  Suppose $\lambda \in \mathbb{C}$ is an isolated point of $\sigma(S_f)$. By the Riesz decomposition theorem, see Proposition~\ref{prop:RieszDecomposition}, there are complemented, nontrivial, $S_f$-invariant closed subspaces $X_1, X_2\subseteq \mathcal{F}(M, \mathbb{C})$ with $\sigma(S_f \big \vert _{X_1})= \{\lambda\}$ and $X=X_1\oplus X_2$. We denote by $P$ the corresponding projection onto $X_1$, set $T \coloneqq S_f \big \vert _{X_1}$ and consider the Lipschitz mapping
  \begin{equation*}
    \phi\colon M \to X_1, \qquad x \mapsto P(\delta_x).
  \end{equation*}
  Since $X_1$ is $S_f$-invariant, $P$ and $S_f$ commute and we have
  \begin{equation} \label{PhiCompatibleWithf}
    \phi(f^n(x))=P(\delta_{f^n(x)})=P(S_f^n(\delta_x))=S_f^n(P(\delta_x))=T^n(\phi(x))
  \end{equation}
  for all $n \geq0$ and all $x \in M$. We now make a case distinction and deduce the contradictory statement $X_1 =\{0\}$ in both cases.

  In the first case, we assume that $\lambda$ is not a root of unity, i.e. $\lambda^k \neq 1$ for every integer $k \geq 1$. By the spectral mapping theorem, we have $\sigma(T^k)= \{\lambda^k\}$ and thus $1 \notin \sigma(T^k)$ for any $k \geq 1$, which implies that $T^k-I$ is invertible. Let $p \in \mathrm{Per}(f)$ be any periodic point of $f$ and denote its period by $k$. We have
  \begin{equation*}
    \phi(p)=\phi(f^k(p))=T^k(\phi(p))
  \end{equation*}
  and thus
  \begin{equation*}
    (T^k-I)\phi(p)=0.
  \end{equation*}
  Since $T^k-I$ is injective, we obtain $\phi(p)=0$. Since $p$ was an arbitrary periodic point, the continuity of $\phi$ togehter with the fact that the periodic points of $f$ are dense im $M$, we may conclude that $\phi(x)=0$ for all $x\in M$. Since $\{\delta_x\colon x\in M\}$ is a total subset of the space $\mathcal{F}(M)$, this implies that $P=0$ and hence $X_1=\{0\}$.

  In the second case, we assume that $\lambda$ is a root of unity and denote by $m\geq1$ the smallest positive integer with $\lambda^m=1$. Thus $\sigma(T^m-I)=\{0\}$ by the spectral mapping theorem. Put differently, $T^m= I+N$ is a quasi-nilpotent perturbation of the identity. Now let $p \in \mathrm{Per}(f)$ and choose $k \geq 2$ such that $f^k(p)=p$. Then
  \begin{equation*}
    \phi(p)=\phi(f^{km}(p))=T^{km}(\phi(p))=(I + N)^k\phi(p)=0.
  \end{equation*}
  and thus
  \begin{equation} \label{FormulawithN}
    ((I+N)^k-I) \phi(p)=0.
  \end{equation}
  The binomial theorem implies the polynomial identity
  \begin{equation*}
    (1+t)^k-1=kt(1+q(t))
  \end{equation*}
  where 
  \begin{equation*}
    q(t)\coloneqq \sum_{i=2}^{k} \binom{k}{i} \frac{1}{k} t^{i-1}.  
  \end{equation*}  
  After plugging $N$ into the identity and substituting this into~\eqref{FormulawithN}, we obtain
  \begin{equation*}
    kN(I+q(N))\phi(p)=k(I+q(N))N\phi(p)=0.
  \end{equation*}
  Since $q$ is a polynomial with no constant term and $N$ is quasinilpotent, we have
  \[
    \sigma(I+q(N))=\{1\}
  \]
  by the spectral mapping theorem and hence the operator $I+ q(N)$ is invertible. This forces $N\phi(p)=0$. Because $p$ was an arbitrary periodic point of $f$ and the periodic points of $f$ are dense in $M$, the operator N vanishes on $\mathrm{im}(\phi) $. By linearity and boundedness it vanishes everywhere. We have deduced
  \begin{equation} \label{IterateIsIdentity}
    T^m=I+N=I.
  \end{equation}
  We use Birkhoff's transitivity theorem to pick a point $\tilde{x}\in M$ with a dense orbit under~$f$. Since $\phi$ is continuous, it follows that also $\phi(M)$ is connected. The set $\phi(\mathrm{orb(\tilde{x},f)})$ is dense in $\phi(M)$. By Equation~\eqref{PhiCompatibleWithf}, this is exactly the orbit of $\phi(\tilde{x})$ under $T$. But Equation~\eqref{IterateIsIdentity} tells us that this orbit consists of at most $m$ points. So it must actually be equal to $\phi(M)$ and by connectedness of $\phi(M)$ it consists of a single point. There is no other option than this point being the origin 0 and $\phi$ being the zero mapping. Again, like in the previous case, this means that the projection $P$ vanishes on the evaluation functionals and thus everywhere.
\end{proof}

\vspace{5mm}
\noindent
Christian Bargetz\\
Universität Innsbruck\\
Department of Mathematics\\
Technikerstraße 13, 6020 Innsbruck, Austria\\
\texttt{christian.bargetz@uibk.ac.at}

\vspace{5mm}
\noindent
Leon Kügler\\
Universität Innsbruck\\
\texttt{leon.kuegler.math@gmail.com}

\end{document}